\renewcommand*\l@section{\@dottedtocline{1}{1.5em}{2.3em}}
\theoremstyle{plain}
\newtheorem{theorem}{Theorem}
\newtheorem{proposition}[theorem]{Proposition}
\newtheorem{lemma}[theorem]{Lemma}
\newtheorem{example}[theorem]{Example}
\newtheorem{corollary}[theorem]{Corollary}
\theoremstyle{definition}
\newtheoremstyle{myrem}
 {3pt}
 {3pt}
 {\normalsize}
 { }
 {\itshape}
 {:}
 { }
 {}
 \theoremstyle{myrem}
 \newtheorem{remark}{Remark}
 \appto\remark{\leftskip\parindent}
 \appto\remark{\rightskip\parindent}
\numberwithin{equation}{section}
\numberwithin{theorem}{section}
\begin{document}

~~~

\vspace{1cm}

\begin{center}
{\Large {\textbf {Weighted  Analytic Torsion  for Weighted Digraphs}}} \footnote[1]{ Shiquan Ren was funded by the Natural Science Foundation of China (No.  12001310)  and China Postdoctoral Science Foundation (No. 2020M680494).
Chong Wang was funded by the Science and Technology Project of Hebei Education Department (No. QN2019333), the Natural Fund of Cangzhou Science and Technology Bureau (No.  197000002) and a Project of Cangzhou Normal University (No.  xnjjl1902).  }
 \vspace{0.58cm}\\

Shiquan Ren,  Chong Wang


\bigskip

\begin{quote}
\begin{abstract}

In 2020,  Alexander Grigor'yan,  Yong  Lin   and   Shing-Tung Yau  \cite{lin1} introduced  the Reidemeister torsion and the analytic torsion 
 for  digraphs  by  means of the path complex and the  path homology theory.  Based on 
 the analytic torsion 
 for  digraphs  introduced in \cite{lin1}, 
we     consider  the notion of   weighted analytic torsion for  vertex-weighted digraphs.
For  any   non-vanishing real functions $f$ and $g$ on the vertex set, we consider the    vertex-weighted digraphs with  the  weights $(f,g)$.    We  calculate the $(f,g)$-weighted  analytic  torsion  by examples  and prove that the $(f,g)$-weighted analytic  torsion  only depend on the ratio $f/g$.  In particular, if the weight is of the diagonal form $(f,f)$,  then the weighted  analytic  torsion equals to the usual (un-weighted)  torsion.
\end{abstract}
\end{quote}

\end{center}

\smallskip

\begin{quote}
{ {\bf 2010 Mathematics Subject Classification.}  	Primary  55U15,    Secondary  58J52,	55U10,  		05C50,  05C25
 }

{{\bf Key words and Phrases.}   chain complex,     Hodge-Laplace operator,   analytic torsion,  homology,  digraph,  weighted simplicial complex }
\end{quote}

\bigskip

\section{Introduction}

\subsection{Torsions of Riemannian Manifolds}

The theory of R-torsion and  analytic torsion of Riemannian manifolds was developed during the 20-th century by K. Reidemeister \cite{rei}, John Milnor \cite{milnor},  D. B. Ray and I. Singer \cite{ray}, Werner M\"{u}ller \cite{muller},  J. Cheeger \cite{ch},   etc.   Let  $M$  be a  compact  oriented  Riemannian $m$-manifold  with a smooth triangulation $K$.
Let  $\rho: \pi_1(K)\longrightarrow {\rm O}(E)$    be  an  acyclic  orthogonal  representation  on a  finite-dimensional  real or  complex  vector space $E$.
We  have a chain complex
\begin{eqnarray*}
C(K,\rho)=\sum_{q=0}^m C_q(\tilde K)\otimes_{\mathbb{F}(\pi_1(K))} E
\end{eqnarray*}
where $\tilde K$  is the  universal cover of $K$  and  $\mathbb{F}$  is the field $\mathbb{R}$ of real numbers if $E$  is a real vector space  or the field  $\mathbb{C}$   of complex numbers  if  $E$  is a complex vector space.  For each  non-negative  integer $q$,  let $\partial^\rho_q:  C_q(K,\rho)\longrightarrow C_{q-1}(K,\rho)$ be the boundary map  twisted by $\rho$.  Under a preferred basis for each $C_q(K,\rho)$,  $\partial^\rho_q$  is represented  by a real matrix.  Let $(\partial^\rho_q)^*$  be the transpose matrix  and  let the $\rho$-twisted Hodge-Laplace operator be
\begin{eqnarray*}
\Delta^{(c)}_q=(\partial^\rho_q)^*\partial_q^\rho+ \partial^\rho_{q+1}(\partial^\rho_{q+1})^*.
 \end{eqnarray*}
 Since  $\rho$  is assumed acyclic,  $\Delta^{(c)}_q$  is  non-degenerate for each $q$.  The  Reidemeister-Franz torsion (or R-torsion)  $\tau_\rho(M)$
 is a combinatorial invariant of $M$ and $\rho$ given by (cf.  \cite[Proposition~1.7]{ray})
 \begin{eqnarray*}
 \log  \tau_\rho(M)=\frac{1}{2}\sum _{q=0}^m (-1)^{q+1} q \log \det (\Delta_q^{(c)}).
 \end{eqnarray*}
 On the other hand,  let $E(\rho)$  be the associated vector bundle $E\longrightarrow  \tilde M\otimes_{\mathbb{F}(\pi_1(M))} E\longrightarrow M$ over $M$  where $\tilde M$  is the universal cover of $M$.   Let $\mathcal{D}=\sum_{q=0}^{m}\mathcal{D}^q$  be the vector space of smooth differential forms on $M$  with values in $E(\rho)$.   For each  $q=0,1,\ldots,m$,  let $d: \mathcal{D}^q\longrightarrow \mathcal{D}^{q+1}$  be the exterior differential,  $*: \mathcal{D}^q\longrightarrow \mathcal{D}^{m-q}$  be the duality with respect to the Riemannian metric on $M$, and $\delta=(-1)^{Nq+N+1}*d*$ be the formal adjoint of $d$.   The Hodge-Laplace operator  is
 \begin{eqnarray*}
 \Delta_q=\delta  d+d\delta.
  \end{eqnarray*}
  Since $\rho$  is assumed acyclic,  by the Hodge theorem, zero is not an eigenvalue of $\Delta_q$.   The zeta function of $\Delta_q$  is
\begin{eqnarray*}
\zeta_{q,\rho}(s)= \frac{1}{\Gamma(s)}\int_0^\infty t^{s-1} {\rm Tr}(e^{-t\Delta_q}) dt
\end{eqnarray*}
and the analytic torsion $T_\rho(M)$,  which depends on $M$ and $\rho$,   is defined as the positive real root of
\begin{eqnarray*}
\log  T_{\rho}(M)=\frac{1}{2}\sum _{q=0}^m (-1)^q q \zeta'_{q,\rho}(0).
\end{eqnarray*}
  It  is  proved by Cheeger \cite{ch}  and  M\"uller \cite{muller}  independently  that  $\tau_\rho(M)=T_\rho(M)$.

\smallskip

\subsection{Torsions of Digraphs}


In recent decades,  the path complex  and the  path-homology theory for digraphs have been    developed  (cf. \cite{lin1,lin2,lin3,lin4,lin5,lin6}).

Let $V$ be a finite set.    Let $n$ be a nonnegative integer.  An  {\it elementary $n$-path} is an ordered sequence $v_0v_1\ldots v_n$ of $n+1$ vertices in $V$.  Here the vertices $v_j$ and $v_k$ are not required to be distinct for any  integers  $0\leq j<k\leq n$.   A  linear combination of   elementary $n$-paths on $V$ is called an  {\it $n$-path} on $V$.  Let $\Lambda _n(V)$  be the vector space consisting of all the $n$-paths on $V$.  The {\it $n$-th boundary operator} is a linear map $\partial_n: \Lambda_n(V)\longrightarrow\Lambda_{n-1}(V)$  given by  $\partial_n(v_0\ldots v_n)=\sum_{i=0}^n (-1)^i v_0\ldots \widehat{v_i} \ldots v_n$.   It can be verified $\partial_{n}\partial_{n+1}=0$  for any $n$ be a nonnegative integer.  Thus equipped with these boundary operators,   we have a chain complex $\Lambda_*(V)$  (cf. \cite[Subsection~2.1]{lin1}, \cite{lin2}, \cite{lin6}, etc.).

An elementary $n$-path $v_0\ldots v_n$ is called {\it regular} if $v_{i-1}\neq v_i$  for all integers $1\leq i\leq n$  and is called {\it irregular} otherwise.  Let $I_n(V)$ be the subspace of $\Lambda_n(V)$ spanned by all the  irregular elementary $n$-paths.  Let $\mathcal{R}_n(V)=
\Lambda_n(V)/I_n(V)$.  The  {\it $n$-th  irregular boundary operator}  is  a linear map
 $\partial_n: \mathcal{R}_n(V)\longrightarrow \mathcal{R}_{n-1}(V)$  which  is obtained  from the $n$-th boundary  operator of $\Lambda_*(V)$ by  moduloing the terms in $I_{n-1}(V)$.
 It is proved that $\mathcal{R}_*(V)$  is a   chain complex with the regular boundary operators $\partial_*$  (cf. \cite[Subsection~2.1]{lin1}, \cite{lin2}, \cite{lin6}, etc.).

  A  {\it digraph} $G=(V,E)$ is a couple where $V$ is a finite set and $E$ is a subset of  $\{(u,v)\in V\times V\mid u\neq  v\}$.  An {\it allowed elementary $n$-path} on $G$ is an elementary $n$-path $v_0v_1\ldots v_n$  on $V$  such  that  $(v_{i-1},v_i)\in E$ for each integer $1\leq i\leq n$.   A linear combination of allowed elementary $n$-paths on $G$ is called an {\it allowed $n$-path} on $G$.  The vector space consisting of all the  allowed elementary $n$-paths on $G$ is denoted as $\mathcal{A}_n(G)$.   The  graded vector  space  $\mathcal{A}_*(G)$  is a graded linear  sub-space of $\mathcal{R}_*(V)$.  However, $\mathcal{A}_*(G)$ may not be a sub-chain complex of $\mathcal{R}_*(V)$.  An $n$-path $\omega\in \mathcal{A}_n(G)$ is called {\it $\partial$-invariant}  if $\partial_n \omega\in \mathcal{A}_{n-1}(G)$.  The space of all the  $\partial$-invariant $n$-paths on $G$ is  given by
\begin{eqnarray}\label{eq-intro}
\Omega_n(G)=\mathcal{A}_n(G)\cap\partial_n^{-1}(\mathcal{A}_{n-1}(G)).
\end{eqnarray}
Let $n$ run over $0,1,2,\ldots$   Then     (\ref{eq-intro})   gives a sub-chain complex $\Omega_*(G)$  of $\mathcal{R}_*(V)$.  The {\it $n$-th path homology} $H_n(G)$  of $G$ is defined as the $n$-th homology group of the chain complex $\Omega_*(G)$.

\smallskip




  In 2020,  Alexander Grigor'yan,  Yong  Lin  and  Shing-Tung Yau \cite{lin1} defined the R-torsion and  the   analytic torsion for digraphs by using path complexes and proved that the R-torsion and  the  analytic torsion are equal (cf. \cite[Theorem~3.14]{lin1}).

Let  $G$  be a digraph such  that there exists a  nonnegative  integer  $N$  satisfying  $\Omega_n(G)=0$  for any integer  $n\geq N+1$.
 For any  $u=\sum_i u^i e_i$ and $v=\sum_i v^i e_i$  in $\Omega_n(G)$   where  $e_i$ are allowed elementary paths on $G$, the first (standard)  inner  product  of $u$  and $v$  is $\langle u,v\rangle =\sum_i u^i v^i$  and  the second  (normalized)  inner  product  of $u$  and $v$  is $\langle u,v\rangle =\frac{1}{n!}\sum_i u^i v^i$.   In the following,  we  choose either the first  inner products or the second inner products  uniformly, written $\langle~,~\rangle$.
  Let $\partial^*$   be  the  adjoint of  $\partial$  with respect to the chosen inner product.  The Hodge-Laplace operator  $\Delta_n: \Omega_n(G)\longrightarrow  \Omega_n(G)$    is  given by
  \begin{eqnarray*}
  \Delta_n=\partial_n^*\partial_n+\partial_{n+1}\partial_{n+1}^*.
   \end{eqnarray*}
   Let  $\{\lambda_i\}_{i=1}^{\dim\Omega_n(G)}$   be  the  eigenvalues  of $\Delta_n$.    The zeta  function $\zeta_n$  of   $\Delta_n$  is   defined by
 \begin{eqnarray*}
 \zeta_n(s)=\sum_{\lambda_i>0}\frac{1}{\lambda_i^s}.
 \end{eqnarray*}
 The  analytic torsion  $T(G)$  of  $G$  is  given  by
 \begin{eqnarray*}
 \log  T(G)=\frac{1}{2}\sum_{n=0}^N(-1)^n n\zeta'_n (0).
 \end{eqnarray*}
On  the  other hand,  the  R-torsion  $\tau(G)$ of  $G$  is defined to be
 \begin{eqnarray*}
 \tau(G)=\tau(\Omega_*(G),\langle~,~\rangle)
 \end{eqnarray*}
 which  is  given by (\ref{eq-lapq})  with respect to the chain complex $\Omega_*(G)$  and the inner products  $\langle~,~\rangle$.  By    an extension of the argument of \cite[Proposition~1.7]{ray},  it  is  proved  in  \cite[Theorem~3.14]{lin1}  that  $\tau(G)=T(G)$.

\smallskip

\subsection{Our Results}

In recent years,  motivated by the fast  development of topological data analysis,  the weighted persistent homology of weighted  simplicial complexes  has  been studied in \cite{cy1,cy,cy2} and the   persistent path-homology  of vertex-weighted digraphs has been  studied in \cite{wch}.   The weighted boundary operator  plays a key role  in  the weighted persistent homology \cite{cy1,cy,cy2} of weighted simplicial complexes and the persistent path-homology  of  vertex-weighted digraphs  \cite{wch}.

\smallskip

In  this paper,  we consider some weights on the vertex set of a digraph. We  construct the weighted Hodge-Laplace operators  by using the weighted boundary operators and their adjoints.  We  study the weighted analytic torsion which is  given  in   (\ref{eq-baqkl})   and  is determined   by the zeta functions of the positive eigenvalues of the weighted Hodge-Laplace operators.

\smallskip

Let $G=(V,E)$  be a digraph.   Let   $f, g:  V\longrightarrow \mathbb{R}^\times$  be  non-vanishing real functions  on   $V$.    Let $n$ be a nonnegative integer.
The $f$-weighted boundary map
$
\partial_n^f: \Lambda _n(V)\longrightarrow \Lambda _{n-1}(V)
$
 is a  linear map  sending each elementary $n$-path
$v_0v_1\ldots v_n$  to  a formal alternating sum $\sum_{i=0}^n (-1)^i f(v_i) v_0\ldots \widehat{v_i}
\ldots v_n $  of  elementary $(n-1)$-paths.  It  induces a $f$-weighted  boundary map $
\partial_n^f: \mathcal{R}_n(V)\longrightarrow \mathcal{R}_{n-1}(V)
$ by  moduloing the terms  in $I_{n-1}(V)$  of the alternating sum.
 Recall that  the space  $\mathcal{A}_n(G)$  consists of all the allowed $n$-paths  on $G$.   It is a subspace of $\mathcal{R} _n(V)$  and is spanned by  all the allowed elementary $n$-paths
\begin{eqnarray*}
v_0 v_1\ldots v_n, ~~~ (v_{i-1}, v_i)\in E{\rm ~for~each~integer~} 1\leq i\leq n.
\end{eqnarray*}
The $(f,g)$-weighted $\partial$-invariant   complex of $G$ is a chain complex
\begin{eqnarray*}
\Omega^{f,g}_n(G)=\mathcal{A}_n(G)\cap(\partial_n^f)^{-1}\big(\mathcal{A}_{n-1}(G)\big),  ~~~n=0,1,2,\ldots,
\end{eqnarray*}
with the weighted boundary operators $\{\partial_n^f\mid  n=0,1,2,\ldots\}$  and with the inner products     $\langle~,~\rangle_g$  where for each nonnegative integer $n$,
 the     inner product     $\langle~,~\rangle_g$  on  $\Omega_n^{f,g}(G)$  is    a  real-valued   bilinear  map  on  $\Omega_n^{f,g}(G)\times \Omega_n^{f,g}(G)$    given by
\begin{eqnarray*}
&&\Big\langle \sum_{v_0v_1\ldots v_n}  x_{v_0v_1\ldots v_n}v_0v_1\ldots v_n,   \sum_{u_0u_1\ldots u_n}  y_{u_0u_1\ldots u_n} u_0u_1\ldots u_n\Big\rangle_g\\
&=&\sum_{v_0v_1\ldots v_n, u_0u_1\ldots u_n} x_{v_0v_1\ldots v_n}y_{u_0u_1\ldots u_n}\prod_{i=0}^n  g(v_i) g(u_i) \delta(v_i,u_i).
\end{eqnarray*}
 The dimension of $\Omega_n^{f,g}(G)$  and the dimension of  $\Omega_n(G)$ are equal.  The $(f,g)$-weighted Hodge-Laplace operator is a linear map
\begin{eqnarray*}
\Delta_n^{f,g}(G): \Omega_n^{f,g}(G)\longrightarrow \Omega_n^{f,g}(G)
\end{eqnarray*}
given  by
\begin{eqnarray*}
\Delta_n^{f,g}(G)(\omega)&=&\big(\partial_n^f\mid _{\Omega_n^{f,g}(G)}\big)^*\big(\partial_n^f\mid _{\Omega_n^{f,g}(G)}\big)(\omega) \nonumber\\
&&+\big(\partial_{n+1}^f\mid _{\Omega_{n+1}^{f,g}(G)}\big)\big(\partial_{n+1}^f\mid _{\Omega_{n+1}^{f,g}(G)}\big)^* (\omega)
\end{eqnarray*}
for any $\omega\in \Omega_n^{f,g}(G)$.   Let  $\{\lambda^{f,g}_i\}_{i=1}^{\dim\Omega_n(G)}$   be  the  eigenvalues  of $\Delta_n^{f,g}(G)$.    The zeta  function $\zeta_n^{f,g}$  of   $\Delta_n^{f,g}(G)$  is   defined by
 \begin{eqnarray*}
 \zeta_n^{f,g}(s)=\sum_{\lambda^{f,g}_i>0}\frac{1}{(\lambda^{f,g}_i)^s}.
 \end{eqnarray*}
 The  weighted analytic torsion  $T(G,f,g)$  of  $G$  is  given  by
 \begin{eqnarray}\label{eq-baqkl}
 \log  T(G,f,g)=\frac{1}{2}\sum_{n=0}^N(-1)^n n\frac{d}{ds}\Big|_{s=0}\Big(\zeta^{f,g}_n (s)\Big).
 \end{eqnarray}

For any real number $c\neq 0$ and any non-vanishing weight $f$ on $V$,  we  use $cf$  to denote the  non-vanishing    weight on $V$ with  the  value $cf(v)$ for each $v\in  V$.

Suppose there exists a positive integer $N$ satisfying $\Omega_n(G)=0$ for any integer $n\geq N+1$.  Let  $t(G,c)=|c|^{s(G)}$, where $s(G)=\sum_{n\geq 0} (-1)^n \dim\partial_n\Omega_n(G)$,   be  a function of $G$ and $c$.

\begin{theorem}\label{th-main}
Let $G=(V,E)$  be a digraph such that there exists a positive integer $N$ satisfying $\Omega_n(G)=0$ for any integer $n\geq N+1$.  Let $f$,  $g$ and $h$
 be three non-vanishing functions on $V$. Let $T(G,f,g)$  be the $(f,g)$-weighted analytic torsion of $G$.  Then
\begin{enumerate}[(i).]
\item
$T(G,fh,gh)=T(G,f,g)$;
\item
$T(G,f,cg)=t(G,c)T(G,f,g)$ and $T(G,cf,g)=t(G,c)^{-1}T(G,f,g)$ for  any non-zero real constant  $c$.
\end{enumerate}
\end{theorem}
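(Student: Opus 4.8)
The plan is to reduce the torsion to one transparent expression and then track how that expression transforms under each of the two weightings. First I would record the finite–dimensional reduction of the analytic torsion, which is available here because the hypothesis $\Omega_n(G)=0$ for $n\ge N+1$ makes every chain group finite–dimensional and every spectrum finite. Writing $\Delta_n^{f,g}$ for the $(f,g)$–weighted Hodge--Laplace operator, $\det{}'$ for the product of the nonzero eigenvalues, and using the orthogonal Hodge decomposition
\[
\Omega_n(G)=\operatorname{im}\partial_{n+1}^{f}\ \oplus\ \ker\Delta_n^{f,g}\ \oplus\ \operatorname{im}(\partial_n^{f})^{*},
\]
one sees that $\Delta_n^{f,g}$ restricts to $\partial_{n+1}^{f}(\partial_{n+1}^{f})^{*}$ on the first summand and to $(\partial_n^{f})^{*}\partial_n^{f}$ on the last, whence $\det{}'\Delta_n^{f,g}=a_{n+1}a_n$ with $a_n:=\det{}'\big((\partial_n^{f})^{*}\partial_n^{f}\big)$. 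Substituting this into the defining alternating sum $\sum_n(-1)^n n\log\det{}'\Delta_n^{f,g}$ and shifting the index collapses it, giving, with the normalization fixed in Section~\ref{s3} (following \cite{lin1}), $\log T(G,f,g)=\tfrac12\sum_{n\ge0}(-1)^{n+1}\log a_n$. Two observations make the rest routine: the number of nonzero eigenvalues of $(\partial_n^{f})^{*}\partial_n^{f}$ equals the rank $r_n:=\dim\partial_n\Omega_n(G)$, which no reweighting can change, so $s(G)=\sum_n(-1)^n r_n$ depends only on $G$; and each $a_n$ transforms multiplicatively under a rescaling of the boundary maps or of the inner products.

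For part (i) I would produce an explicit isometry of weighted complexes. Let $\Phi_f$ be the diagonal isomorphism sending an elementary allowed path $v_0\cdots v_n$ to $f(v_0)\cdots f(v_n)\,v_0\cdots v_n$. By the construction in Section~\ref{s3} this $\Phi_f$ preserves $\mathcal{A}_*(G)$ and satisfies $\partial_n^{f}=\Phi_f^{-1}\partial_n\Phi_f$, so it carries the $f$–twisted $\partial$–invariant paths onto $\Omega_*(G)$ and intertwines $\partial^{f}$ with the ordinary $\partial$; moreover the $f$–weighted inner product is exactly $\langle x,y\rangle_f=\langle\Phi_f x,\Phi_f y\rangle$, so $\Phi_f$ is an isometry onto $\Omega_*(G)$ with its standard inner product. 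An isometry intertwining the boundaries also intertwines their adjoints, hence conjugates the weighted Laplacian to the unweighted one, $\Phi_f\Delta_n^{f,f}\Phi_f^{-1}=\Delta_n$. Conjugate operators share their spectra, so $\det{}'\Delta_n^{f,f}=\det{}'\Delta_n$ for all $n$ and therefore $T(G,f,f)=T(G)$.

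For part (ii) I would extract two scaling lemmas from the definitions and feed them into the formula of the first paragraph. Since $\Phi_{cf}=c^{\,n+1}\Phi_f$ on $\Omega_n(G)$, the identity $\partial_n^{f}=\Phi_f^{-1}\partial_n\Phi_f$ forces $\partial_n^{cf}=c\,\partial_n^{f}$, so $(\partial_n^{cf})^{*}\partial_n^{cf}=c^{2}(\partial_n^{f})^{*}\partial_n^{f}$ and each $a_n$ is multiplied by $|c|^{2r_n}$. On the other hand replacing $g$ by $cg$ rescales the inner product on $\Omega_n(G)$ by $|c|^{2(n+1)}$; the adjoint then picks up the ratio $|c|^{2n}/|c|^{2(n+1)}=|c|^{-2}$, so $(\partial_n^{f})^{*_{cg}}\partial_n^{f}=|c|^{-2}(\partial_n^{f})^{*_{g}}\partial_n^{f}$ and each $a_n$ is multiplied by $|c|^{-2r_n}$. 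Plugging these into $\log T=\tfrac12\sum_n(-1)^{n+1}\log a_n$ gives
\[
\log T(G,f,cg)-\log T(G,f,g)=\tfrac12\sum_{n\ge0}(-1)^{n+1}(-2r_n)\log|c|=\Big(\sum_{n\ge0}(-1)^n r_n\Big)\log|c|=s(G)\log|c|,
\]
and likewise $\log T(G,cf,g)-\log T(G,f,g)=-s(G)\log|c|$. Exponentiating yields $T(G,f,cg)=|c|^{s(G)}T(G,f,g)$ and $T(G,cf,g)=|c|^{-s(G)}T(G,f,g)$ with $t(G,c)=|c|^{s(G)}$, as claimed.

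The main obstacle I anticipate is not a single hard estimate but the bookkeeping underlying the first paragraph: one must verify the Hodge decomposition and the identity $\det{}'\Delta_n^{f,g}=a_{n+1}a_n$ with the correct eigenvalue multiplicities, confirm that the count of nonzero eigenvalues is the weight–independent rank $r_n$, and pin down the global sign together with the exponents $2(n+1)$ coming from the definitions in Section~\ref{s3}, so that the two scalings produce exactly $|c|^{\pm s(G)}$ rather than some other power. Once these are in place, parts (i) and (ii) follow from the displayed identities with no further analysis.
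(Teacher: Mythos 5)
Your argument is correct. Part (i) is essentially the paper's own proof: your $\Phi_f$ is exactly the inverse of the isomorphism $\varphi$ of Lemma~\ref{le-3.a}/Lemma~\ref{le-3.aaaaa} (the paper divides by $f(v_0)\cdots f(v_n)$ where you multiply), and the conclusion that an inner-product-preserving chain isomorphism conjugates the Laplacians and hence preserves all spectra is the content of Lemma~\ref{le-2.1} and Proposition~\ref{pr-3.8}.

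For part (ii) your route is genuinely different in one half. The paper splits the two scalings asymmetrically: for $g\mapsto cg$ it invokes the cited transformation rule for the R-torsion under a conformal change of inner product (\cite[Corollary~3.8]{lin1}, i.e.\ equation (\ref{eq-tor-2.2})) together with the R-torsion/analytic-torsion identity, and then uses Lemma~\ref{le-2.d} to convert the exponent $\sum(-1)^n n(\dim\Omega_n-\dim H_n)$ into $s(G)$; for $f\mapsto cf$ it computes $\Delta_n^{cf,g}=c^2\Delta_n^{f,g}$ and differentiates the zeta function directly (Proposition~\ref{pr-add}). You instead prove the telescoping identity $\log T=\tfrac12\sum_n(-1)^{n+1}\log\det{}'\bigl((\partial_n^f)^*\partial_n^f\bigr)$ once and read off both scalings from how $\partial_n^f$ and the adjoint rescale, obtaining $s(G)=\sum(-1)^n r_n$ directly rather than via Lemma~\ref{le-2.d}. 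This buys a uniform, self-contained treatment of both halves of (ii) that bypasses the appeal to the R-torsion machinery; the cost is that you must carefully justify the Hodge decomposition for the weighted complex and the multiplicity bookkeeping in $\det{}'\Delta_n^{f,g}=a_na_{n+1}$, which you correctly flag as the main thing to verify (and which does check out: the paper's own scaling $\iota_{cg}=c^{2n}\iota_g$ is off by one from your $c^{2(n+1)}$, but the discrepancy contributes $\sum(-1)^n(\dim\Omega_n-\dim H_n)=0$ to the exponent, so both give $|c|^{s(G)}$). One cosmetic point: the ratio $c^{2n}/c^{2(n+1)}=c^{-2}$ needs no absolute values, and the left-hand side of your Hodge decomposition should read $\Omega_n^{f,g}(G)$.
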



  The ratio $f/g$  is a non-vanishing real function on $V$.  By Theorem~\ref{th-main}~(i),   the weighted analytic torsion $T(G,f,g)$ only depends on   $f/g$.  The next corollary  is a particular case of  Theorem~\ref{th-main}.

\begin{corollary}
Let $G$ be a digraph such that there exists a positive integer $N$ satisfying $\Omega_n(G)=0$ for any integer $n\geq N+1$.   Then  for any non-vanishing weight $f$ on $V$ and  any   real number $c\neq 0$,   we have
\begin{eqnarray*}
T(G)=T(G,f,f)=t(G,c)T(G,cf,f)
=t(G,c)^{-1}T(G,f,cf).
\end{eqnarray*}
\end{corollary}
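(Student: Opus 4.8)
The plan is to obtain all three equalities in the displayed chain as immediate consequences of Theorem~\ref{th-main}, using parts (i) and (ii) as established facts. Throughout, I would note that whenever $f$ is non-vanishing and $c\neq 0$ the scaled weight $cf$ is again non-vanishing, so the hypotheses of the theorem apply to every weight that appears; moreover $t(G,c)=|c|^{s(G)}\neq 0$, so multiplication by $t(G,c)$ is invertible and the division steps below are legitimate.

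For the leftmost equality $T(G,cf,cf)=T(G,f,f)$, I would apply Theorem~\ref{th-main}~(i) to the diagonal weights $(cf,cf)$ and $(f,f)$ separately: part (i) gives $T(G,cf,cf)=T(G)$ and $T(G,f,f)=T(G)$, whence the two quantities agree, both equalling the unweighted torsion $T(G)$. For the middle equality $T(G,f,f)=t(G,c)T(G,cf,f)$, I would specialize the first-weight scaling identity $T(G,cf,g)=t(G,c)^{-1}T(G,f,g)$ of part (ii) to $g=f$, obtaining $T(G,cf,f)=t(G,c)^{-1}T(G,f,f)$, and then multiply both sides by $t(G,c)$. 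For the rightmost equality $T(G,f,f)=t(G,c)^{-1}T(G,f,cf)$, I would instead specialize the second-weight scaling identity $T(G,f,cg)=t(G,c)T(G,f,g)$ of part (ii) to $g=f$, obtaining $T(G,f,cf)=t(G,c)T(G,f,f)$, and then divide by $t(G,c)$.

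Since each link in the chain reduces to a single substitution into an already-proved part of Theorem~\ref{th-main}, I do not anticipate any genuine obstacle; the entire argument is a bookkeeping exercise combining the diagonal-invariance statement (i) with the two scaling laws (ii). The only points meriting a line of care are the non-vanishing of the scaled weights and the non-degeneracy of $t(G,c)$, both of which follow from the standing assumption $c\neq 0$.
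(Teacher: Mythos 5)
Your proposal is correct and matches the paper's intent exactly: the paper states that the corollary ``follows immediately by combining Theorem~\ref{th-main}~(i) and (ii)'', and your three substitutions (part (i) applied to the diagonal weights, and the two scaling laws of part (ii) specialized to $g=f$) are precisely that combination. Nothing further is needed.
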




\smallskip

\section{Some Preliminaries on Chain Complexes}\label{s2}

Firstly,  we  review some definitions on chain complexes  and the Hodge-Laplace  operators.  Secondly,  we give some preliminaries  on  the R-torsion and  the analytic torsion   of chain complexes.   This  is an analogue  of    \cite[Section~3]{lin1}.  Thirdly, we  prove some auxiliary  lemmas.

\smallskip

\subsection{Chain Complexes and The Hodge-Laplace Operators}

  Let $C_*=\{C_n,\partial_n\}_{n\geq 0}$ be a   chain complex where for each nonnegative integer  $n$,  $C_n$  is a  finite-dimensional   real vector space and $\partial_n: C_n\longrightarrow C_{n-1}$  is the boundary operator. Suppose  each    $C_n$ is equipped with an inner product $\langle~,~\rangle$.  Let $\partial_n^*: C_{n-1}\longrightarrow C_n$ be the adjoint operator of $\partial_n$ with respect to the inner products on $C_n$ and $C_{n-1}$.  The Hodge-Laplace operator $\Delta_n: C_n\longrightarrow C_n$  is defined by
\begin{eqnarray*}
\Delta_n u= \partial_n^*\partial_n u + \partial_{n+1}\partial_{n+1}^* u
\end{eqnarray*}
for any $u\in C_n$.
An element $u\in C_n$  is called harmonic if $\Delta_n u=0$.  Let $\mathcal{H}_n$  be the  set of all the  harmonic chains in $C_n$.  It can be proved that (cf. \cite[Section~3.1]{lin1})
\begin{eqnarray*}
\mathcal{H}_n={\rm Ker}\partial_n \cap {\rm Ker}\partial_{n+1}^*
\end{eqnarray*}
and the space $C_n$ is an orthogonal sum
\begin{eqnarray*}
C_n=\partial_{n+1} C_{n+1}\oplus \partial^*_{n}C_{n-1} \oplus\mathcal{H}_n
\end{eqnarray*}
of the  three subspaces $\partial_{n+1} C_{n+1}$,  $\partial^*_{n}C_{n-1}$  and  $\mathcal{H}_n$.
Moreover, there is a natural isomorphism
\begin{eqnarray}\label{eq-iso}
H_n(C_*)\cong \mathcal{H}_n
\end{eqnarray}
where  $H_n(C_*)$  is  the $n$-th homology of $C_*$.
It is direct to check that $\Delta_n$ is a self-adjoint semi-positive definite operator on $C_n$.  We denote its eigenvalues as $\{\lambda_i\mid 0\leq i\leq \dim C_n-1\}$.  The zeta function $\zeta_n(s)$  is defined by
\begin{eqnarray*}
\zeta_n(s)=\sum _{\lambda_i>0} \frac{1}{\lambda_i^s}.
\end{eqnarray*}
Suppose in addition  that there exists a positive integer $N$ such that $C_n=0$ for any  integer  $n\geq N+1$.  Then we have the next lemma.
\begin{lemma}\label{le-2.d}
For each  nonnegative integer $n$,  it holds
\begin{eqnarray*}
\sum_{n=0}^N(-1)^n n\Big(\dim C_n-\dim H_n\Big)=\sum_{n=0}^N(-1)^n\dim {\rm Im}\partial_n.
\end{eqnarray*}
\end{lemma}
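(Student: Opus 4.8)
The plan is to reduce everything to the ranks of the boundary maps and then collapse the sum by a single index shift. Writing $r_n = \dim \mathrm{Im}\,\partial_n$, I would first record the two standard dimension counts attached to the map $\partial_n : C_n \to C_{n-1}$: the rank-nullity theorem gives $\dim C_n = \dim\mathrm{Ker}\,\partial_n + r_n$, and the definition $H_n = \mathrm{Ker}\,\partial_n/\mathrm{Im}\,\partial_{n+1}$ gives $\dim H_n = \dim\mathrm{Ker}\,\partial_n - r_{n+1}$. Subtracting these, the kernel dimension cancels and I obtain the key identity
\[
\dim C_n - \dim H_n = r_n + r_{n+1}.
\]

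Substituting this into the left-hand side splits it as $\sum_{n=0}^N (-1)^n n\, r_n + \sum_{n=0}^N (-1)^n n\, r_{n+1}$. I would reindex the second sum by setting $m = n+1$, rewriting it as $\sum_{m=1}^{N+1}(-1)^{m-1}(m-1)\, r_m$. Two boundary terms then need to be checked to vanish, and this is where the hypothesis enters: since $C_m = 0$ for $m\geq N$ we have $r_{N+1}=0$, so the top term $m=N+1$ drops out, while under the usual convention $\partial_0 = 0$ we have $r_0 = 0$, so neither sum receives a contribution from the index $0$. Relabelling $m$ back to $n$, both sums now run over $1\leq n\leq N$, and adding them term by term gives the coefficient $(-1)^n\bigl(n-(n-1)\bigr) = (-1)^n$ in front of $r_n$. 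Hence the left-hand side equals $\sum_{n=1}^N(-1)^n r_n = \sum_{n=0}^N(-1)^n\dim\mathrm{Im}\,\partial_n$, which is exactly the right-hand side.

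There is no genuine obstacle in this argument; its entire content is the identity $\dim C_n - \dim H_n = r_n + r_{n+1}$, after which the statement follows from elementary reindexing. The only point demanding a little care is confirming that the endpoint terms at $n=0$ and $m=N+1$ genuinely contribute nothing, so that the cancellation is clean — and this is precisely the role played by the finiteness hypothesis $C_n = 0$ for $n\geq N$.
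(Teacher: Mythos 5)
Your proof is correct and follows essentially the same route as the paper's: the identity $\dim C_n-\dim H_n=\dim\mathrm{Im}\,\partial_n+\dim\mathrm{Im}\,\partial_{n+1}$ followed by an index shift and telescoping of the coefficients $(-1)^n n+(-1)^{n-1}(n-1)=(-1)^n$. Your explicit check of the endpoint terms ($r_0=0$ from $\partial_0=0$ and $r_{N+1}=0$ from the vanishing hypothesis) is slightly more careful than the paper's, which leaves these implicit.
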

\begin{proof}
For each   nonnegative integer  $n$, we note that $\dim H_n=\dim {\rm Ker}\partial_n-\dim{\rm Im}\partial_{n+1}$ and $\dim C_n=\dim {\rm Ker}\partial_n +\dim {\rm Im}\partial_{n}$.   By a direct calculation,  it follows that
\begin{eqnarray*}
\sum_{n=0}^N(-1)^n n\Big(\dim C_n-\dim H_n\Big)&=&\sum_{n=0}^N (-1)^n n\Big(\dim {\rm Im}\partial_n +\dim {\rm Im}\partial_{n+1}\Big)\\
&=&\sum_{n=0}^N \Big((-1)^n n +(-1)^{n-1}(n-1)\Big) \dim {\rm Im}\partial_n\\
&=&\sum_{n=0}^N(-1)^n\dim {\rm Im}\partial_n.
\end{eqnarray*}
We obtain the lemma.
\end{proof}

\smallskip

\subsection{The Analytic Torsions and The R-torsions}

The analytic torsion $T(C_*,\langle~,~\rangle)$ of the chain complex $C_*$  with  respect to the  inner product $\langle~,~\rangle$  is defined by
\begin{eqnarray*}
\log T(C_*,\langle~,~\rangle)=\frac{1}{2}\sum_{n=0}^N(-1)^n n \zeta'_n(0).
\end{eqnarray*}

On the other hand,  we can use the transition matrices between certain bases  (we  use bases as the plural of basis) to define the R-torsion $\tau$ (cf.  \cite[Section~3]{lin1}).  
For each  integer  $0\leq n\leq N$,  let ${\bf c}_n$  be a basis in $C_n$ and let ${\bf h}_n$ be a basis in  $H_n(C_*)$.     Let $B_n=\partial_{n+1} C_{n+1}$ and ${\bf  b}_n$ be any basis in $B_n$.  Let $Z_n={\rm Ker}(\partial_n)$.  Then $H_n(C_*)=Z_n/B_n$.    For each $w\in {\bf b}_{n-1}$, choose one element $v\in \partial_{n}^{-1} w$ such that $\partial_n v=w$.  Let $\tilde {\bf  b}_n$ be the collection of all these chains $v$.    Then $\tilde {\bf b}_n$  is a linearly  independent set in $C_n$. Similarly, for each element of ${\bf h}_n$ we choose its representative in $Z_n$ and denote the resulting independent set by $\tilde {\bf  h}_n$.  The union $({\bf  b}_n,\tilde {\bf  h}_n)$  is a basis in $Z_n$.  And the union $({\bf b}_n,\tilde {\bf  h}_n, \tilde {\bf  b}_n)$  is a basis in $C_n$.  By using the notation $[{\bf b}_n,\tilde{\bf h}_n,\tilde {\bf b}_n/{\bf  c}_n]$  to denote the absolute value  of the determinant of the transition matrix from  $({\bf b}_n,\tilde {\bf  h}_n, \tilde {\bf  b}_n)$  to ${\bf c}_n$,  the R-torsion $\tau(C_*,{\bf c},{\bf  h})$  of the chain complex $C_*$ with the preferred bases ${\bf c}$ and ${\bf  h}$ is a positive real number defined by (cf. \cite[Definition~3.6]{lin1})
\begin{eqnarray*}
\log \tau(C_*,{\bf c},{\bf h})=\sum_{n=0}^N(-1)^n \log [{\bf b}_n,\tilde{\bf h}_n,\tilde {\bf b}_n/{\bf c}_n].
\end{eqnarray*}
It is proved in \cite[Lemma~3.7]{lin1}  that the value of $\tau(C_*,{\bf c},{\bf h})$  does not depend on the choice of the bases ${\bf b}_n$,  the representatives in $\tilde {\bf b}_n$ and the representatives in $\tilde {\bf h}_n$; and if ${\bf c}'$ and ${\bf h}'$ are other collections of bases in $C_*$ and $H_*$ respectively,  then
\begin{eqnarray}\label{eq-lin-3.9}
\log \tau(C_*,{\bf c}',{\bf h}')=\log \tau(C_*,{\bf c},{\bf h})+\sum_{n=0}^N (-1)^n \big(\log [{\bf c}_n/{\bf c}'_n] +\log [{\bf h}'_n/{\bf h}_n]).
\end{eqnarray}

Let us fix an inner product $\langle~,~\rangle$ in $C_n$ for each integer  $0\leq n\leq N$. Then we have the induced inner product in the subspaces $B_n$, $Z_n$ and $\mathcal{H}_n$.  By (\ref{eq-iso}),  we represent each homology class by the corresponding harmonic chain. Thus we can transfer the inner product on the harmonic chains to $H_n(C_*)$.
We choose the bases ${\bf c}_n$ and ${\bf h}_n$ to be orthonormal and define the R-torsion of $(C_*,\langle~,~\rangle)$ by
\begin{eqnarray}\label{eq-lapq}
\tau(C_*,\langle~,~\rangle)=\tau(C_*,{\bf c},{\bf h}).
\end{eqnarray}
By (\ref{eq-lin-3.9}), the right-hand side does not depend on the choice of orthonormal bases ${\bf c}$ and ${\bf h}$.
It  is proved in \cite[Theorem~3.14]{lin1} that
\begin{eqnarray}\label{eq-id}
\tau(C_*,\langle~,~\rangle)=T(C_*,\langle~,~\rangle).
\end{eqnarray}

 Consider two different inner products $\langle~,~\rangle_1$  and $\langle~,~\rangle_2$ on $C_*$.  Assume that there are positive real numbers $r_n$, where $0\leq n\leq N$  are    integers,  such that for all chains $u,w\in C_n$,
\begin{eqnarray*}
\langle u,w\rangle_2 =r_n\langle u,w\rangle_1.
\end{eqnarray*}
Then by \cite[Corollary~3.8]{lin1} and (\ref{eq-id}),
\begin{eqnarray*}
T(C_*,\langle~,~\rangle_2)=T(C_*,\langle~,~\rangle_1)\prod_{n=0}^N r_n^{\frac{(-1)^n}{2}(\dim C_n-\dim H_n(C_*))}.
\end{eqnarray*}

\smallskip

\subsection{Auxiliary Lemmas}

The following lemmas   play  a key role in this paper.  

\begin{lemma}\label{le-bbab}
Let $C_*$  and $C'_*$  be two chain complexes.  For each   nonnegative integer $n$,  let $\langle~,~\rangle_n$  be an inner product on $C_n$ and let  $\langle~,~\rangle'_n$  be  an inner product  on $C'_n$.   If there is a chain isomorphism $\varphi: C_*\longrightarrow C'_*$ s.t.
\begin{eqnarray*}
\langle\varphi(a),\varphi(b)\rangle'_n=\langle a,b\rangle_n
\end{eqnarray*}
for any       nonnegative integer $n$ and any $a,b\in C_n$,
then for each   nonnegative integer  $n$,
\begin{eqnarray}\label{eq-key}
\Delta_n(C_*,\langle~,~\rangle_*)=\varphi^{-1}\circ \Delta_n(C'_*,\langle~,~\rangle'_*)\circ \varphi
\end{eqnarray}
where $\Delta_n(C_*,\langle~,~\rangle_*)$  is the Hodge-Laplace operator of the chain complex $C_*$ with respect to the inner products $\langle~,~\rangle_*$ and $\Delta_n(C'_*,\langle~,~\rangle'_*)$  is the Hodge-Laplace operator of the chain complex $C'_*$ with respect to the inner products $\langle~,~\rangle'_*$.
\end{lemma}
\begin{proof}
We write $\Delta_n(C_*,\langle~,~\rangle_*)$  as $\Delta_n$   and write $\Delta_n(C'_*,\langle~,~\rangle'_*)$ as $\Delta'_n$  for short.
Firstly,  we prove that for any  nonnegative integer $n$ and any  $a\in C_n$,
\begin{eqnarray}\label{eq-le2.1-1}
(\partial'_{n+1})^*\varphi(a)=\varphi(\partial^*_{n+1}(a)).
\end{eqnarray}
Note that since $\varphi$  is a chain  isomorphism,  we have that $\varphi$ is a linear isomorphism and for any   nonnegative integer  $n$,
\begin{eqnarray}\label{eq-le2.1-3}
\partial'_n\varphi=\varphi\partial_n.
\end{eqnarray}
Thus  for any $e\in C_{n+1}$,  we have
\begin{eqnarray*}
\langle (\partial'_{n+1})^*\varphi(a),\varphi(e)\rangle'&=& \langle \varphi(a),\partial'_{n+1}\varphi(e)\rangle'\\
&=&\langle \varphi(a),\varphi\partial_{n+1}(e)\rangle'\\
&=&\langle a, \partial_{n+1}(e)\rangle\\
&=&\langle \partial_{n+1}^* (a),e\rangle\\
&=&\langle \varphi\partial_{n+1}^* (a),\varphi(e)\rangle'.
\end{eqnarray*}
Letting $e$ run over $C_{n+1}$,   we note that $\varphi(e)$ runs over $C'_{n+1}$.  Thus (\ref{eq-le2.1-1}) follows  by the last equation.

 Secondly,  we prove that for any   nonnegative integer  $n$  and any $a\in C_n$,
\begin{eqnarray}\label{eq-le2.1-2}
 \varphi^{-1} \Delta'_n\varphi(a) =\Delta_n(a).
\end{eqnarray}
Let $b\in C_n$.  By a straight-forward calculation and with the help of   (\ref{eq-le2.1-1})  and  (\ref{eq-le2.1-3}),
\begin{eqnarray*}
\langle \varphi^{-1} \Delta'_n\varphi(a),b\rangle&=&\langle \Delta'_n\varphi(a),\varphi(b)\rangle'\\
&=&\langle (\partial'_n)^*\partial'_n(\varphi(a)),\varphi(b)\rangle' + \langle \partial'_{n+1}(\partial'_{n+1})^* (\varphi(a)),\varphi(b)\rangle' \\
&=&\langle (\partial'_n)^*\varphi\partial_n(a),\varphi(b)\rangle' + \langle \partial'_{n+1}\varphi(\partial_{n+1})^* (a),\varphi(b)\rangle' \\
&=&\langle \varphi(\partial_n)^*\partial_n(a),\varphi(b)\rangle' + \langle \varphi\partial_{n+1}(\partial_{n+1})^* (a),\varphi(b)\rangle' \\
&=&\langle (\partial_n)^*\partial_n(a),b\rangle + \langle \partial_{n+1}(\partial_{n+1})^* (a),b\rangle \\
&=&\langle \Delta_n(a),b\rangle.
\end{eqnarray*}
Letting $b$ run over $C_n$,  (\ref{eq-le2.1-2}) follows  from the last equation.   By (\ref{eq-le2.1-2}),  we have (\ref{eq-key}).  The lemma is proved.
\end{proof}

The  next lemma is a consequence of Lemma~\ref{le-bbab}.

\begin{lemma}\label{le-2.1}
Suppose all the assumptions in Lemma~\ref{le-bbab} are satisfied.  Then
by choosing  the  orthonormal bases properly,  the matrix representatives of the Hodge-Laplace operators are equal
\begin{eqnarray*}
[\Delta_n(C_*,\langle~,~\rangle_*)]=[ \Delta_n(C'_*,\langle~,~\rangle'_*)].
\end{eqnarray*}
Consequently,
\begin{eqnarray*}
T(C_*,\langle~,~\rangle_*)=T(C'_*,\langle~,~\rangle'_*).
\end{eqnarray*}
\end{lemma}

\begin{proof}
 We observe that both  $\Delta_n$ and $\Delta'_n$ are semi-positive defniite and self-adjoint.   We enumerate 
 all the eigenvalues  $0\leq \lambda_0<  \lambda_1< \cdots  <  \lambda _{k}$  of $\Delta_n$   as  well as     all  the  eigenvalues  $0\leq \lambda'_0<  \lambda'_1< \cdots  \leq  \lambda' _{k'}$  of $\Delta'_n$   in  an  increasing order, 
  where each  eigenvalue  $\lambda=\lambda_i$,  $i=1,2,\ldots,k$,   of  $\Delta_n$  as  well as   each eigenvalue  $\lambda'=\lambda'_j$,    $j=1,2,\ldots,k'$,   of $\Delta'_n$  is  counted with  multiplicity.   Let $E(\lambda)$ and $E'(\lambda')$ be the corresponding  eigenspaces of $\lambda$  and $\lambda'$ respectively.  Then
\begin{eqnarray*}
C_n= \oplus _{i=0}^k E(\lambda_i),~~~ C'_n=\oplus_{i=0}^{k'}E'(\lambda'_i).
\end{eqnarray*}
Let $e\in E'(\lambda'_i)$.  Then by  (\ref{eq-key}),
\begin{eqnarray*}
\Delta_n(\varphi^{-1}(e))=\varphi^{-1}\Delta'_n(e)=\varphi^{-1}(\lambda'_i e)=\lambda'_i\varphi^{-1}(e).
\end{eqnarray*}
Thus $\lambda_i'$  is also an eigenvalue of $\Delta_n$  and  $\varphi^{-1}(e)\in  E(\lambda'_i)$.  Conversely,  it can be proved in a similar way that for any $e\in E(\lambda_i)$, $\lambda_i$  is also an eigenvalue of $\Delta'_n$  and $\varphi(e)\in E'(\lambda_i)$.  Hence  the sets of eigenvalues $\{\lambda_i  \mid i=0,1,\ldots\}$ and  $\{\lambda'_i  \mid i=0,1,\ldots\}$  are equal, and $E(\lambda)$ is linearly isomorphic to $E'(\lambda)$ for each such eigenvalue $\lambda$.  Therefore,  as multi-sets, $\{\lambda_i \mid i=0,1,\ldots\}$ and  $\{\lambda'_i    \mid i=0,1,\ldots\}$ are equal.  The analytic torsions of $\Delta_n$ and $\Delta'_n$ must be  equal as well.
 The lemma follows.
\end{proof}

\smallskip

\section{Paths  on Discrete Sets}

 Let $V$  be a discrete set.
  Let $g: V\longrightarrow \mathbb{R}^\times$ be a non-vanishing real-valued weight function on $V$ assigning a non-zero number to each vertex  $v\in  V$.   Let $n$ be a nonnegative integer.  Let $\Lambda _n(V)$  be the vector space of  the  $n$-paths on $V$.  Then $\Lambda _n(V)$  has a basis 
  \begin{eqnarray*}
\{v_0 v_1\ldots v_n \mid  v_0,v_1,\ldots,v_n\in V\}
\end{eqnarray*}
consisting of all the  elementary $n$-paths.   
 The weight $g$ on $V$ induces a weight $g: \Lambda _n(V)\longrightarrow \mathbb{R}$  by
\begin{eqnarray*}
g\Big ( \sum  x_{v_0v_1\ldots v_n} v_0v_1\ldots v_n \Big)=\sum x_{v_0v_1\ldots v_n} g(v_0)g(v_1)\ldots g(v_n).
\end{eqnarray*}
We note that
\begin{eqnarray*}
g(v_0)g(v_1)\ldots g(v_n)\neq 0
\end{eqnarray*}
for any  elementary $n$-path $v_0v_1\ldots v_n$  on  $V$.
For any two vertices $v,u\in V$,  we use the notation  $\delta(v,u)=0$  if $v\neq u$ and $\delta(v,u)=1$  if $v=u$.  The weight $g$ induces an  inner product
\begin{eqnarray*}
\langle~,~\rangle_g:  \Lambda _n(V)\times \Lambda _n(V)\longrightarrow \mathbb{R}
\end{eqnarray*}
by
\begin{eqnarray*}
\langle v_0v_1\ldots v_n,    u_0u_1\ldots u_n\rangle_g=\prod_{i=0}^n  g(v_i) g(u_i) \delta(v_i,u_i), 
\end{eqnarray*}
where   the  inner product $\langle~,~\rangle_g$  extends bilinearly over  the real numbers.  Particularly,  we let $\langle~,~\rangle$  be the usual (un-weighted)  inner product on $\Lambda_n(V)$ given by the constant weight $g=1$.

Let $f: V\longrightarrow\mathbb{R}^\times$  be another non-vanishing real-valued weight function on $V$ (here the choices of $f$ and $g$  do not depend on each other). The $f$-weighted boundary map
\begin{eqnarray*}
\partial_n^f: \Lambda _n(V)\longrightarrow \Lambda _{n-1}(V)
\end{eqnarray*}
 is given by
\begin{eqnarray*}
\partial_n^f(v_0v_1\ldots v_n)=\sum_{i=0}^n (-1)^i f(v_i) v_0\ldots \widehat{v_i}
\ldots v_n,
\end{eqnarray*}
where  the map $\partial_n^f$  extends linearly over the real numbers.
The adjoint operator $(\partial_n^f)^*_g: \Lambda _{n-1}(V)\longrightarrow \Lambda _n(V)$ of $\partial_n^f$ with respect to $\langle~,~\rangle_g$  is given by
\begin{eqnarray*}
\langle\partial_n^f (a),b\rangle_g=\langle a,(\partial_n^f)^*_g(b)\rangle_g
\end{eqnarray*}
for any $a\in \Lambda _n(V)$ and any $b\in \Lambda _{n-1}(V)$.  The $n$-th  $(f,g)$-weighted Hodge-Laplace operator on $V$  is a linear map
\begin{eqnarray*}
\Delta_n^{f,g}:  \Lambda _n(V)\longrightarrow \Lambda _n(V)
\end{eqnarray*}
given by
\begin{eqnarray*}
\Delta_n^{f,g}=(\partial_n^f)^*_g(\partial_n^f) + (\partial_{n+1}^f)(\partial_{n+1}^f)^*_g.
\end{eqnarray*}
We use $\Lambda _*(V,f,g)$  to denote the chain complex $\{\Lambda _n(V),\partial_n^f\}_{n\geq 0}$ with the inner product $\langle~,~\rangle_g$.  Particularly when $f=g$,  we  have the next lemma.

\begin{lemma}\label{le-3.a}
For any three non-vanishing weights $f$, $g$ and $h$ on $V$,  there is a canonical chain isomorphism
\begin{eqnarray}\label{eq-mod}
\varphi: \Lambda _*(V,f,g)\longrightarrow  \Lambda _*(V,fh,gh)
\end{eqnarray}
given by
\begin{eqnarray}\label{eq-rep}
\varphi\Big(  \sum  x_{v_0v_1\ldots v_n} v_0v_1\ldots v_n \Big)=\sum  \frac{x_{v_0v_1\ldots v_n}}{h(v_0)h(v_1)\cdots h(v_n)} v_0v_1\ldots v_n
\end{eqnarray}
such that
\begin{eqnarray*}
\langle\varphi(a),\varphi(b)\rangle_{gh}=\langle a,b\rangle_g
\end{eqnarray*}
for any $a,b\in \Lambda _n(V)$.
\end{lemma}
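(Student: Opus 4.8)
The plan is to verify the three assertions packaged into the statement in turn, reducing each to a computation on the basis of elementary $n$-paths, since $\varphi$, the boundary maps and the inner products are all defined on this basis and extended (bi)linearly. First I would check that $\varphi$ is a linear isomorphism: on each elementary path $v_0\ldots v_n$ the map acts as multiplication by the scalar $1/\big(f(v_0)\cdots f(v_n)\big)$, which is nonzero because $f$ is non-vanishing. Hence $\varphi$ is diagonal in the elementary-path basis with nonzero diagonal entries, so it is invertible; its inverse multiplies $v_0\ldots v_n$ by $f(v_0)\cdots f(v_n)$.

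Next I would establish the isometry property, again on basis elements. Applying the definition of $\iota_f$ to $\varphi(v_0\ldots v_n)$ and $\varphi(u_0\ldots u_n)$, the two scaling factors contribute $1/\big(f(v_0)\cdots f(v_n)\big)\cdot 1/\big(f(u_0)\cdots f(u_n)\big)$, while $\iota_f$ itself contributes $\prod_{i=0}^n f(v_i)f(u_i)\delta(v_i,u_i)$. When the two paths are not equal the factor $\prod_i \delta(v_i,u_i)$ vanishes; when they agree, each $f(u_i)=f(v_i)$ and the $f$-factors collapse to $\big(f(v_0)\cdots f(v_n)\big)^2$, exactly cancelling the two scaling factors and leaving $\prod_{i=0}^n \delta(v_i,u_i)$, which is precisely $\iota(v_0\ldots v_n, u_0\ldots u_n)$. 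Bilinearity then gives $\iota_f(\varphi(a),\varphi(b))=\iota(a,b)$ for all $a,b$.

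The chain-map identity $\partial_n^f\circ\varphi=\varphi\circ\partial_n$ is the heart of the lemma, and I would verify it directly on a basis element $v_0\ldots v_n$. Computing $\varphi\partial_n$ first, the ordinary boundary produces $\sum_{i=0}^n(-1)^i v_0\ldots\widehat{v_i}\ldots v_n$, and $\varphi$ then scales the $i$-th summand by $1/\prod_{j\neq i}f(v_j)$, since $v_i$ is absent from that face. Computing $\partial_n^f\varphi$ the other way, $\varphi$ first scales the whole path by $1/\prod_{j}f(v_j)$, after which the twisted boundary $\partial_n^f$ inserts the factor $f(v_i)$ in front of the $i$-th face; this $f(v_i)$ cancels the matching factor in the denominator, again leaving $1/\prod_{j\neq i}f(v_j)$. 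The two expressions therefore agree summand by summand. This cancellation---the factor $f(v_i)$ carried by the $f$-twisted boundary precisely compensating for the deletion of the vertex $v_i$ in the denominator of $\varphi$---is the only nontrivial point, and it is exactly the reason the boundary was twisted by $f$ in the first place. The remaining verifications are routine bookkeeping of which $f$-factors appear; once all three properties hold, $\varphi$ is a chain isomorphism that is also an isometry, which is the form needed to feed into Lemma~\ref{le-2.1}.
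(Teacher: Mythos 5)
Your proposal is correct and follows essentially the same route as the paper's proof: all three properties (linear isomorphism, isometry, and the chain-map identity $\partial_n^f\circ\varphi=\varphi\circ\partial_n$) are verified by direct computation on the basis of elementary $n$-paths, with the key point in both arguments being that the factor $f(v_i)$ introduced by the twisted boundary cancels the corresponding factor in the denominator of $\varphi$. The only difference is the order in which the three verifications are carried out, which is immaterial.
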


\begin{proof}
Firstly,  let $\sum  x_{v_0v_1\ldots v_n} v_0v_1\ldots v_n$ be in $\Lambda _n(V)$.  We have
\begin{eqnarray*}
&&\partial_n^{fh}\circ \varphi \Big(\sum  x_{v_0v_1\ldots v_n} v_0v_1\ldots v_n\Big)\\
&=&\partial_n^{fh} \Big(\sum  \frac{x_{v_0v_1\ldots v_n}}{h(v_0)h(v_1)\cdots h(v_n)} v_0v_1\ldots v_n\Big)\\
&=&\sum  \frac{x_{v_0v_1\ldots v_n}}{h(v_0)h(v_1)\cdots h(v_n)} \sum_{i=1}^n (-1)^i  f(v_i)h(v_i) v_0\ldots \widehat{v_i}\cdots v_n\\
&=&\sum x_{v_0v_1\ldots v_n}\sum _{i=0}^n (-1)^i f(v_i) \frac{v_0\ldots \widehat{v_i}\cdots v_n}{h(v_0)\ldots \widehat{h(v_i)}\cdots h(v_n)}\\
&=&\varphi\circ \partial_n^f\Big(\sum  x_{v_0v_1\ldots v_n} v_0v_1\ldots v_n\Big).
\end{eqnarray*}
Thus $\varphi$ is a chain map.  Secondly, it is direct that $\varphi$ is a linear isomorphism.  Thirdly,  let $\sum  x_{v_0v_1\ldots v_n} v_0v_1\ldots v_n$ and  $\sum  y_{u_0u_1\ldots u_n} u_0u_1\ldots u_n$   be in $\Lambda _n(V)$.   Then
\begin{eqnarray*}
&&\Big\langle\varphi\Big(\sum  x_{v_0v_1\ldots v_n} v_0v_1\ldots v_n\Big),\varphi\Big(\sum  y_{u_0u_1\ldots u_n} u_0u_1\ldots u_n\Big)\Big\rangle_{gh}\\
&=&\sum\sum  \frac{x_{v_0v_1\ldots v_n}}{h(v_0)h(v_1)\cdots h(v_n)}\cdot  \frac{y_{u_0u_1\ldots u_n}}{h(u_0) h(u_1)\cdots h(u_n)}\Big\langle  v_0v_1\ldots v_n,u_0u_1\ldots u_n\Big\rangle_{gh}\\
&=&\sum\sum  \frac{x_{v_0v_1\ldots v_n}}{h(v_0)h(v_1)\cdots h(v_n)}\cdot  \frac{y_{u_0u_1\ldots u_n}}{h(u_0) h(u_1)\cdots h(u_n)}\prod_{i=0}^n g(v_i) h(v_i) g(u_i)  h(u_i) \delta(v_i,u_i)\\
&=&\sum\sum  x_{v_0v_1\ldots v_n}y_{u_0u_1\ldots u_n} \prod_{i=0}^n  g(v_i) g(u_i) \delta(v_i,u_i)\\
&=&\Big\langle \sum  x_{v_0v_1\ldots v_n} v_0v_1\ldots v_n,  \sum  y_{u_0u_1\ldots u_n} u_0u_1\ldots u_n\Big\rangle_g.
\end{eqnarray*}
Thus $\varphi$ is an isometry with respect to the inner products $\langle~,~\rangle_g$  and $\langle~,~\rangle_{gh}$.
\end{proof}

As a particular case  we take $f=g=1$ to be the trivial weight of  constant $1$   in Lemma~\ref{le-3.a}.   We have the next corollary.

\begin{corollary}
For any non-vanishing weight $h$ on $V$,  there is a canonical chain isomorphism
\begin{eqnarray*}
\varphi: \Lambda _*(V)\longrightarrow  \Lambda _*(V,h,h)
\end{eqnarray*}
given by
\begin{eqnarray*}
\varphi\Big(  \sum  x_{v_0v_1\ldots v_n} v_0v_1\ldots v_n \Big)=\sum  \frac{x_{v_0v_1\ldots v_n}}{h(v_0)h(v_1)\cdots h(v_n)} v_0v_1\ldots v_n
\end{eqnarray*}
such that
\begin{eqnarray*}
\langle\varphi(a),\varphi(b)\rangle_h=\langle a,b\rangle
\end{eqnarray*}
for any $a,b\in \Lambda _n(V)$. \qed
\end{corollary}

Note  that $\varphi$ in (\ref{eq-mod}) sends a  regular $n$-path to a regular $n$-path.  The next lemma follows from Lemma~\ref{le-3.a} and  transits the assertion  in Lemma~\ref{le-3.a} to  the quotient spaces $\mathcal{R}_*(V)$.

\begin{lemma}\label{le-3.aaa}
For any three non-vanishing weights $f$, $g$ and $h$ on $V$,  there is a canonical chain isomorphism
\begin{eqnarray}\label{eq-r.1}
\varphi: \mathcal{R} _*(V,f,g)\longrightarrow  \mathcal{R} _*(V,fh,gh)
\end{eqnarray}
given by (\ref{eq-rep})
such that
\begin{eqnarray*}
\langle \varphi(a),\varphi(b)\rangle_{gh}=\langle a,b\rangle_g
\end{eqnarray*}
for any $a,b\in \mathcal{R} _n(V,f,g)$. \qed
\end{lemma}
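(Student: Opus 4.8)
The plan is to descend the isomorphism $\varphi$ of Lemma~\ref{le-3.a} from $\Lambda_*(V)$ to the quotient complex $\mathcal{R}_*(V)=\Lambda_*(V)/I_*(V)$, checking in turn that it respects the subspace $I_*(V)$ of irregular paths, the regular boundary operators, and the induced inner products. First I would observe that $\varphi$ carries $I_n(V)$ onto itself: by (\ref{eq-rep}) the map $\varphi$ rescales each elementary path $v_0v_1\ldots v_n$ by the nonzero scalar $1/(f(v_0)f(v_1)\cdots f(v_n))$ without altering its underlying sequence of vertices, so it sends each irregular elementary path to a nonzero multiple of itself and each regular one to a nonzero multiple of itself. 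Hence $\varphi(I_n(V))=I_n(V)$, and since $\varphi$ is a linear isomorphism (Lemma~\ref{le-3.a}) it induces a linear isomorphism $\bar\varphi:\mathcal{R}_n(V)\to\mathcal{R}_n(V,f,f)$ on the quotients, which is precisely the map $\varphi$ of (\ref{eq-r.1}).

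Second I would check that $\bar\varphi$ is a chain map for the regular boundary operators, namely the maps induced on the quotients by $\partial_n$ (on $\mathcal{R}_*(V)$) and by $\partial_n^f$ (on $\mathcal{R}_*(V,f,f)$). Since the identity $\partial_n^f\circ\varphi=\varphi\circ\partial_n$ on $\Lambda_*(V)$ is already established in Lemma~\ref{le-3.a}, and since both $\varphi$ and the two boundary operators respect $I_*(V)$, passing to the quotients yields the corresponding commutation $\bar\partial_n^f\circ\bar\varphi=\bar\varphi\circ\bar\partial_n$, so $\bar\varphi$ is a chain isomorphism.

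Third, and this is the step needing the most care, I would verify that $\bar\varphi$ preserves the induced inner products. The key observation is that for any weight the bilinear form $\iota_g$ makes distinct elementary $n$-paths orthogonal, so the regular elementary paths span the orthogonal complement of $I_n(V)$ under \emph{both} $\iota$ and $\iota_f$. This identifies $\mathcal{R}_n(V)$ (resp. $\mathcal{R}_n(V,f,f)$) isometrically with the subspace $W_n\subseteq\Lambda_n(V)$ spanned by regular elementary paths, equipped with $\iota|_{W_n}$ (resp. $\iota_f|_{W_n}$). Because $\varphi$ maps $W_n$ onto $W_n$ and satisfies $\iota_f(\varphi(a),\varphi(b))=\iota(a,b)$ by Lemma~\ref{le-3.a}, its restriction is an isometry $(W_n,\iota)\to(W_n,\iota_f)$, and under the above identifications this restriction is exactly $\bar\varphi$; therefore $\iota_f(\bar\varphi(a),\bar\varphi(b))=\iota(a,b)$ for all $a,b\in\mathcal{R}_n(V)$.

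The main obstacle is the inner-product step: one must make sure the inner product on each quotient is the intended one and that $\iota$ and $\iota_f$ induce \emph{compatible} identifications of the quotient with the regular-path subspace $W_n$. Once one notes that distinct elementary paths are orthogonal under every weighted inner product, both identifications collapse to the same $W_n$, and the claim reduces cleanly to the isometry already proved on $\Lambda_*(V)$. The remaining verifications are formal consequences of $\varphi$ preserving $I_*(V)$.
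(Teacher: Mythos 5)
Your proof is correct and follows essentially the same route as the paper, which simply observes that $\varphi$ sends regular (hence also irregular) elementary paths to nonzero multiples of themselves and therefore descends from Lemma~\ref{le-3.a} to the quotient complexes. Your third step, noting that distinct elementary paths are orthogonal under every weighted inner product so that both $\iota$ and $\iota_f$ identify the quotient with the same regular-path subspace, is a useful explicit justification of a point the paper leaves implicit.
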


\smallskip

 \section{Proof of Theorem~\ref{th-main}}\label{s3}

\subsection{Proof of Theorem~\ref{th-main}~(i)}

 The next lemma follows   by  restricting  the chain isomorphism $\varphi$  in  Lemma~\ref{le-3.aaa} to  the sub-chain complex $\Omega_*^{f,g}(G)$  of $\mathcal{R}_*(V,f,g)$.

 \begin{lemma}\label{le-3.aaaaa}
For any non-vanishing weights $f$,  $g$  and  $h$ on $V$,  there is a canonical chain isomorphism
\begin{eqnarray*}
\varphi: \Omega _*^{f,g}(G)\longrightarrow  \Omega^{fh,gh} _*(G)
\end{eqnarray*}
given by (\ref{eq-rep})
such that
\begin{eqnarray*}
\langle \varphi(a),\varphi(b)\rangle_{gh}=\langle a,b\rangle_g
\end{eqnarray*}
for any $a,b\in \Omega_*(G)$.  \qed
\end{lemma}

\begin{proof}
With the help of the proof of Lemma~\ref{le-3.a} and Lemma~\ref{le-3.aaa}, we have
 \begin{eqnarray*}
\varphi(\mathcal{A}_n(G))
=\mathcal{A}_n(G).
\end{eqnarray*}
Moreover,  for any non-vanishing weights $f$,  $g$ and $h$ on $V$ and any
\begin{eqnarray*}
\sum x_{v_0v_1\ldots v_n} v_0v_1\ldots v_n\in \mathcal{A}_n(G),
\end{eqnarray*}
we have
\begin{eqnarray*}
&~~~~&\sum x_{v_0v_1\ldots v_n} v_0v_1\ldots v_n  \in \Omega^{f,g}_n(G)\\
&\Longleftrightarrow&\partial^f_n\Big(\sum x_{v_0v_1\ldots v_n} v_0v_1\ldots v_n\Big)\in \mathcal{A}_{n-1}(G)\\
&\Longleftrightarrow&
\varphi\partial_n^f\Big(\sum  x_{v_0v_1\ldots v_n} v_0v_1\ldots v_n\Big)\in \mathcal{A}_{n-1}(G)\\
&\Longleftrightarrow& \partial_n^{fh}\varphi \Big(\sum x_{v_0v_1\ldots v_n} v_0v_1\ldots v_n\Big)\in \mathcal{A}_{n-1}(G)\\
&\Longleftrightarrow&\varphi\Big(\sum x_{v_0v_1\ldots v_n} v_0v_1\ldots v_n\Big)  \in \Omega^{fh,gh}_n(G).
 \end{eqnarray*}
Therefore,  we observe that the map (\ref{eq-r.1}) sends $\Omega_n^{f,g}(G)$ to $\Omega_n^{fh,gh}(G)$ isomorphically  for each $n$ be a nonnegative integer.
The lemma follows.
\end{proof}

By Lemma~\ref{le-2.1} and Lemma~\ref{le-3.aaaaa},  we have

\begin{proposition}\label{pr-3.8}
For any digraph $G$ and any non-vanishing weights $f$,  $g$ and $h$ on $V$,  we have
\begin{eqnarray*}
\Delta_n^{f,g}(G)=\varphi^{-1}\circ \Delta^{fh,gh}_n(G)\circ \varphi
\end{eqnarray*}
where $\varphi$  is the isomorphism given in Lemma~\ref{le-3.a}.  Consequently,  by choosing  the  orthonormal bases properly,  the matrix representatives satisfy
\begin{eqnarray}\label{eq-add1}
[\Delta_n^{f,g}(G)]=[\Delta^{fh,gh}_n(G)]
\end{eqnarray}
for each $n$ be a nonnegative integer. In addition,  if   there exists a positive integer $N$ such that $\Omega_n(G)=0$ for any $n\geq N+1$,  then it follows from (\ref{eq-add1}) that
\begin{eqnarray}\label{eq-3.aaacc}
T(G,f,g)=T(G,fh,gh).
\end{eqnarray}
\qed
\end{proposition}

\begin{remark}
For a digraph $G$,  if there are infinitely many $n$ such that $\Omega_n(G)\neq 0$,  then the analytic torsion $T$ is not well-defined.   In this case, we cannot get the last assertion   (\ref{eq-3.aaacc})  in Proposition~\ref{pr-3.8}.
\end{remark}

\begin{remark}
By   Proposition~\ref{pr-3.8},  the $(f,g)$-weighted Hodge-Laplace operator $\Delta_n^{f,g}(G)$ only depends on the ratio function $f/g$.  Thus in order to study the $(f,g)$-weighted Hodge-Laplace operators $\Delta_n^{f,g}(G)$ for all the  non-vanishing weights $f$ and $g$,  it is equivalent to study $\Delta^{f,1}_n(G)$ for all the  non-vanishing weights $f$  or study $\Delta_n^{1,g}(G)$ for all the  non-vanishing weights $g$.
\end{remark}

Following from (\ref{eq-add1})  in Proposition~\ref{pr-3.8},  we have

\begin{corollary}\label{co-3.n}
For any digraph $G$ and any non-vanishing weights $f$ and $g$ on $V$,    $\varphi$ induces a canonical linear isomorphism
\begin{eqnarray}\label{eq-3.b}
\varphi: H_n(\{\Omega^{f,g}_k(G), \partial_k^f\}_{k\geq 0})\longrightarrow H_n(G),~~~n=0,1,2,\ldots.
\end{eqnarray}
 Here $H_n(G)$ is the usual (un-weighted) path homology of $G$ with coefficients in real numbers.  Moreover, if  $f=g$,  then  by  representing  the  homology classes  by the  corresponding  harmonic chains,  we have that   (\ref{eq-3.b}) is an isometry  \footnote[2]{The weighted homology  (with coefficients in a field)  of  a weighted simplicial complex  (cf. \cite{dawson,  cy1,cy, cy2})   with non-vanishing weights  is  proved to be linearly isomorphic  to the usual simplicial homology  (cf. \cite[Theorem~5.1]{cy}). }.
  \qed
\end{corollary}

Theorem~\ref{th-main}~(i)  follows from  (\ref{eq-3.aaacc})  in Proposition~\ref{pr-3.8}  immediately.



\smallskip

\subsection{Proof of Theorem~\ref{th-main}~(ii)}

We fix $f$ and multiply $g$ by a non-zero scalar $c$.  By applying \cite[Corollary~3.8]{lin1} to the R-torsion,  we  have

\begin{proposition}\label{pr-3.v}
Let $G$ be a digraph.  Suppose   there exists a positive integer $N$ such that $\Omega_n(G)=0$ for any integer  $n\geq N+1$.   Let $f$ and $g$ be   non-vanishing weights on $V$.  For any   real number $c\neq 0$,  we  use $cg$  to denote the     weight on $V$ with the value $cg(v)$ at any $v\in V$. Then
\begin{eqnarray}\label{eq-3.9}
T(G,f,cg)=|c|^{s(G)}T(G,f,g)
\end{eqnarray}
where
\begin{eqnarray*}
s(G)=\sum_{n=0}^N(-1)^n \dim{\rm Im} \big(\partial_n\mid _{\Omega_n(G)}\big).
\end{eqnarray*}
\end{proposition}

\begin{proof}
For any $n$ be a nonnegative integer and any chains $\omega,\omega'\in \Omega_n^{f,cg}(G)$, we have
\begin{eqnarray*}
\langle\omega,\omega'\rangle_{cg}=c^{2(n+1)}\langle
\omega,\omega'\rangle_g.
\end{eqnarray*}
By  Proposition~\ref{pr-3.8} and  Corollary~\ref{co-3.n}, for each $n$ be a nonnegative integer we have
\begin{eqnarray*}
\dim \Omega_n^{f,cg}(G)&=&\dim \Omega_n^{f,g}(G)\\
&=&\dim \Omega_n (G),\\
\dim H_n(\{ \Omega^{f,cg}_k(G), \partial_k^f \}_{k\geq 0})&=&\dim H_n(\{\Omega^{f,g}_k(G), \partial_k^f\}_{k\geq 0})\\
&=&\dim H_n(G).
\end{eqnarray*}
 Thus  by  \cite[Corollary~3.8]{lin1},  we have (\ref{eq-3.9}) where
\begin{eqnarray}
t(G,c)&=&\prod_{n= 0}^N |c|^{(-1)^{n} (n+1) \big(\dim \Omega_n(G)-\dim H_n(G)\big)}\nonumber\\
&=&|c|^{\sum_{n=0}^N  (-1)^n (n+1) \big(\dim \Omega_n(G)-\dim H_n(G)\big)}\nonumber\\
&=&|c|^{\sum_{n=0}^N  (-1)^n n \big(\dim \Omega_n(G)-\dim H_n(G)\big)}. \label{eq-revise}
\end{eqnarray}
The last equality of (\ref{eq-revise}) follows from that
\begin{eqnarray*}
\chi(G)=\sum_{n=0}^N  (-1)^n   \dim \Omega_n(G)=\sum_{n=0}^N  (-1)^n \dim H_n(G).
\end{eqnarray*}
By Lemma~\ref{le-2.d}, the right-hand side of  the last equality in (\ref{eq-revise})  equals to $|c|^{s(G)}$.
\end{proof}

We  fix $g$  and multiply $f$  by a non-zero scalar $c$.  By a straight-forward calculation of the analytic torsion,  we  have

\begin{proposition}\label{pr-add}
Let $G$ be a digraph.  Suppose there exists a positive integer $N$ such that $\Omega_n(G)=0$ for any  integer  $n\geq N+1$.  Let $f$ and $g$ be   non-vanishing weights on $V$.  For any   real number $c\neq 0$,  we  use $cf$  to denote the     weight on $V$ with the value $cf(v)$ at any $v\in V$.  Then
\begin{eqnarray}\label{eq-3.29}
T(G,cf,g)=t(G,c)^{-1}T(G,f,g).
\end{eqnarray}
\end{proposition}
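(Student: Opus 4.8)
The plan is to exploit the fact that rescaling the first weight $f$ by $c$ rescales the twisted boundary map by the same constant, and then to track the resulting effect on the spectrum of the Hodge--Laplace operators. Directly from the definition of $\partial_n^f$ one sees that $\partial_n^{cf}=c\,\partial_n^f$ on every elementary path, hence on all of $\Omega_n^{f,g}(G)$. Since the second weight $g$ is held fixed, the inner product $\iota_g$ is unchanged, and because $c$ is a real scalar the adjoint scales in the same way, $(\partial_n^{cf})^*_g=c\,(\partial_n^f)^*_g$. Substituting into the definition of the weighted Hodge--Laplace operator then gives
\begin{eqnarray*}
\Delta_n^{cf,g}(G)=c^2\,\Delta_n^{f,g}(G)
\end{eqnarray*}
for every $n\geq 0$.

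From here I would read off the spectral consequence: the two operators have identical eigenspaces, and each eigenvalue $\lambda$ of $\Delta_n^{f,g}(G)$ becomes $c^2\lambda$ for $\Delta_n^{cf,g}(G)$. Feeding this into the zeta function gives $\zeta_n^{cf}(s)=(c^2)^{-s}\zeta_n^{f}(s)$, so differentiating at $s=0$ yields $(\zeta_n^{cf})'(0)=(\zeta_n^{f})'(0)-2\log|c|\,\zeta_n^{f}(0)$. The key value $\zeta_n^{f}(0)$ is simply the number of positive eigenvalues of $\Delta_n^{f,g}(G)$, which by the Hodge decomposition recalled in Section~\ref{s2} equals $\dim\Omega_n^{f,g}(G)-\dim\mathcal{H}_n$; by Proposition~\ref{pr-3.8} and Corollary~\ref{co-3.n} this is $\dim\Omega_n(G)-\dim H_n(G)$.

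I would then substitute into $\log T=\tfrac12\sum_{n=0}^N(-1)^n n\,\zeta_n'(0)$. The first term reassembles into $\log T(G,f,g)$, while the second contributes $-\log|c|\sum_{n=0}^N(-1)^n n\big(\dim\Omega_n(G)-\dim H_n(G)\big)$. Applying Lemma~\ref{le-2.d} to the chain complex $(\Omega_*^{f,g}(G),\partial_*^f)$ rewrites this last sum as $\sum_{n=0}^N(-1)^n\dim\mathrm{Im}(\partial_n^f|_{\Omega_n^{f,g}(G)})$, which equals $s(G)$ once one observes that the chain isomorphism of Corollary~\ref{co-3.n} identifies the rank of $\partial_n^f$ with that of the untwisted $\partial_n$. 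This gives $\log T(G,cf,g)=\log T(G,f,g)-s(G)\log|c|$, i.e.\ $T(G,cf,g)=|c|^{-s(G)}T(G,f,g)=t(G,c)^{-1}T(G,f,g)$, as claimed.

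The main difficulty is bookkeeping rather than conceptual. One must be careful that in this finite-dimensional discrete setting $\zeta_n(0)$ genuinely equals the count of nonzero eigenvalues (so that the $\log|c|$ term carries exactly the coefficient $\dim\Omega_n(G)-\dim H_n(G)$), and that the rank produced by Lemma~\ref{le-2.d} is taken for the twisted operator $\partial_n^f$ and only afterwards matched to the untwisted rank defining $s(G)$. It is also worth double-checking the sign: because scaling $f$ enlarges the eigenvalues of the Laplacian (rather than merely rescaling the inner product as in Proposition~\ref{pr-3.v}), the exponent comes out as $-s(G)$, consistent with the inverse factor $t(G,c)^{-1}$.
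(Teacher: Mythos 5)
Your proposal is correct and follows essentially the same route as the paper: establish $\partial_n^{cf}=c\,\partial_n^f$ and $(\partial_n^{cf})^*_g=c\,(\partial_n^f)^*_g$, deduce $\Delta_n^{cf,g}=c^2\Delta_n^{f,g}$, track the factor $(c^2)^{-s}$ through $\zeta_n'(0)$, and convert the resulting coefficient $\sum_n(-1)^n n(\dim\Omega_n(G)-\dim H_n(G))$ into $s(G)$ via Lemma~\ref{le-2.d}. The only cosmetic difference is that you phrase the spectral shift through the identity $\zeta_n^{cf}(s)=(c^2)^{-s}\zeta_n^{f}(s)$ while the paper differentiates the sum term by term; these are the same computation.
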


\begin{proof}
Let $n$ be a nonnegative integer.
It follows directly that
\begin{eqnarray}\label{eq-f-1}
\partial_n^{cf}=c\partial_n^f.
\end{eqnarray}
By a similar argument with  the proof of Lemma~\ref{le-2.1},     we have
\begin{eqnarray*}
\langle (\partial_{n+1}^{cf})^* u, w\rangle_g =c\langle (\partial_{n+1}^f)^* u,w\rangle_g
\end{eqnarray*}
for any chains $u,w\in\Omega_n(G)$.   This  implies that
\begin{eqnarray}\label{eq-f-2}
(\partial_{n+1}^{cf})^* =c (\partial_{n+1}^f)^*.
\end{eqnarray}
By (\ref{eq-f-1})  and (\ref{eq-f-2}),  we have
\begin{eqnarray*}
\Delta_n^{cf,g}=c^2\Delta_n^{f,g}.
\end{eqnarray*}
Consequently,  if we denote the non-zero eigenvalues of $\Delta_n^{f,g}$ as  $0< \lambda_1< \lambda_2<\ldots$,  then the non-zero  eigenvalues of $\Delta_n^{cf,g}$  are  $0< c^2 \lambda_1< c^2\lambda_2<\ldots$  where $c^2\lambda_i$  has the same multiplicity with $\lambda_i$.
Therefore,  with the help of \cite[(3.21)]{lin1},  we  have
\begin{eqnarray*}
\log T(G,cf, g)&=&
\frac{1}{2}\sum_{n=0}^N(-1)^n n \frac{d}{ds}\Big| _{s=0}\Big(\sum_{\lambda_i>0}\frac{1}{(c^2\lambda_i)^s}\Big)\\
&=&\frac{1}{2}\sum_{n=0}^N(-1)^n n
\big(-\sum_{\lambda_i>0}
\log(c^2\lambda_i)\big)\\
&=&\frac{1}{2}\sum_{n=0}^N(-1)^n n
\big(-\sum_{\lambda_i>0}\log (\lambda_i)
\big)\\
&&-\frac{1}{2}\sum_{n=0}^N(-1)^n n (2\log |c|)\big(\dim\Omega_n(G)-\dim H_n(G)\big)\\
&=&\frac{1}{2}\sum_{n=0}^N(-1)^n n \frac{d}{ds}\Big| _{s=0}\Big(\sum_{\lambda_i>0}\frac{1}{\lambda_i^s}\Big)
-(\log |c|)s(G)\\
&=& \log T(G,f,g)-(\log |c|)s(G).
\end{eqnarray*}
Taking the exponential map on both sides of the  equations,  we have
(\ref{eq-3.29})
\footnote[3]{ For the aim  of corroboration,  we proved Proposition~\ref{pr-3.v}  and Proposition~\ref{pr-add} independently and by different approaches.  Nevertheless,
\begin{itemize}
\item
combining Proposition~\ref{pr-3.8} and Proposition~\ref{pr-3.v} together,  we   can  obtain  Proposition~\ref{pr-add};
\item
 combining  Proposition~\ref{pr-3.8} and Proposition~\ref{pr-add}  together,   we  can obtain Proposition~\ref{pr-3.v}.
 \end{itemize}
  }.
\end{proof}



Summarizing  Proposition~\ref{pr-3.v}  and Proposition~\ref{pr-add},  we obtain Theorem~\ref{th-main}~(ii).

\smallskip

\section{Examples}\label{s4}

\begin{example}
Consider the line digraph $G=(V,E)$ given in \cite[Figure~1, Example~3.9]{lin1}  where $V=\{0,1,2,3,4\}$  and $E$ consists of $4$ directed edges where the $i$-th directed edge has the form either $(i,i+1)$ or $(i+1,i)$,  for $i=0,1,2,3$.  Let $f$ and $g$ be two non-vanishing real-valued weight functions on $V$.  As in \cite[(3.17)]{lin1},  we denote $\bar e_{i(i+1)}$ for the directed edge (which is called the $1$-path in our setting) $(i,i+1)$ or $(i+1,i)$  in $E$.  We  denote $e_i$ for the $0$-path  consisting of a single vertex $i$.  Let $\sigma_i=1$  if $(i,i+1)\in E$ and $-1$ if $(i+1,i)\in E$.  Then
\begin{eqnarray*}
\partial_1^f \bar e_{i(i+1)}=\sigma_i(f(i)e_{i+1}-f(i+1) e_i)
\end{eqnarray*}
and
\begin{eqnarray*}
&\langle e_i, e_j\rangle_g=g(i)g(j)\delta(i,j),\\
&\langle \bar e_{i(i+1)} , \bar e_{j(j+1)}\rangle _g =g(i)g(i+1)g(j)g(j+1)\delta(i,j).
\end{eqnarray*}
Note that the last equality holds because each edge is assigned with exactly one direction.
Choose the following $\langle~,~\rangle_g$-orthonormal basis
\begin{eqnarray*}
\omega_0=\Big\{\frac{e_i}{g(i)}\mid i=0,1,2,3\Big\}
\end{eqnarray*}
 in $\Omega_0^{f,g}(G)$ and   the following $\langle~,~\rangle_g$-orthonormal basis
\begin{eqnarray*}
\omega_1=\Big\{\frac{\bar  e_{i(i+1)}}{g(i)g(i+1)} \mid  i=0,1,2 \Big\}
\end{eqnarray*}
in  $\Omega_1^{f,g}(G)$.  We note  $\Omega_n^{f,g}(G)=0$  for $n\geq 2$.  In $\partial_1^f (\Omega^{f,g}_1(G))$ choose the basis
\begin{eqnarray*}
b_0=\Big\{\sigma_i\Big(\frac{f(i)}{g(i)}\frac{e_{i+1}}{g(i+1)}-\frac{f(i+1)}{g(i+1)} \frac{e_i}{g(i)}\Big)\mid  i=0,1,2\Big\}
\end{eqnarray*}
and set
\begin{eqnarray*}
\tilde b_1=\Big\{\frac{\bar  e_{i(i+1)}}{g(i)g(i+1)} \mid  i=0,1,2 \Big\}.
\end{eqnarray*}
It is clear that ${\rm Ker}\partial_0^f=\Omega_0^{f,g}(G)$.  Thus the $\langle~,~\rangle_g$-orthogonal complement of $\partial_1^f\Omega_1^{f,g}(G)$  in ${\rm Ker}\partial_0^f$  is
\begin{eqnarray*}
\mathcal{H}_0={\rm Span}\Big\{\frac{f(0)}{g(0)^2}e_0+\frac{f(1)}{g(1)^2}e_1+\frac{f(2)}{g(2)^2}e_2+\frac{f(3)}{g(3)^2}e_3\Big\}
\end{eqnarray*}
so that
\begin{eqnarray*}
h_0=\Big\{\frac{\frac{f(0)}{g(0)}\frac{e_0}{g(0)}+\frac{f(1)}{g(1)}\frac{e_1}{g(1)}+\frac{f(2)}{g(2)}\frac{e_2}{g(2)}+\frac{f(3)}{g(3)}\frac{e_3}{g(3)}}{\sqrt{\frac{f(0)^2}{g(0)^2}+\frac{f(1)^2}{g(1)^2}+\frac{f(2)^2}{g(2)^2}+\frac{f(3)^2}{g(3)^2}}}
\Big\}.
\end{eqnarray*}
 Similar with \cite[(3.19)]{lin1},  by taking the absolute value of the determinant of the transition matrix,  we see that
\begin{eqnarray*}
[b_0,h_0,\tilde b_0/\omega_0]&=&\frac{1}{\sqrt{\sum_{i=0}^3\frac{f(i)^2}{g(i)^2}}} \Big| \sum_{k=0}^3 (-1)^{k+1} \frac{f(k)}{g(k)}\prod_{0\leq j\leq k-1}  \frac{-\sigma_j f(j+1)}{g(j+1)}\prod_{k\leq j\leq 2} \frac{\sigma_jf(j)}{g(j)}\Big|\\
&=&\frac{1}{\sqrt{\sum_{i=0}^3\frac{f(i)^2}{g(i)^2}}} \Big|  \sum_{k=0}^3  (-1)^{2k+1}\frac{f(k)}{g(k)}\prod_{0\leq j\leq k-1}  \frac{\sigma_j f(j+1)}{g(j+1)}\prod_{k\leq j\leq  2} \frac{\sigma_jf(j)}{g(j)}\Big|\\
&=&\frac{1}{\sqrt{\sum_{i=0}^3\frac{f(i)^2}{g(i)^2}}} \Big|  \sum_{k=0}^3  \frac{f(k)}{g(k)}\prod_{0\leq j\leq k-1}  \frac{\sigma_j f(j+1)}{g(j+1)}\prod_{k\leq j\leq 2} \frac{\sigma_jf(j)}{g(j)}\Big|.
\end{eqnarray*}
Moreover, since $b_1$ is the empty-set,  we have
\begin{eqnarray*}
h_1=\frac{\bar  e_{34}}{g(3)g(4)}.
\end{eqnarray*}
Consequently,  by taking the absolute value of the determinant of the transition matrix,  we have
\begin{eqnarray*}
[b_1,h_1,\tilde b_1/\omega_1]=1.
\end{eqnarray*}
It follows that the weighted analytic torsion is
\begin{eqnarray*}
T(G,f,g)&=&\prod_{n=0}^1[b_n,h_n,\tilde b_n/\omega_n]^{(-1)^n}\\
&=&\frac{1}{\sqrt{\sum_{i=0}^3\frac{f(i)^2}{g(i)^2}}} \Big|  \sum_{k=0}^3   \frac{f(k)}{g(k)}\prod_{0\leq j\leq k-1}  \frac{\sigma_j f(j+1)}{g(j+1)}\prod_{k\leq j\leq 2} \frac{\sigma_jf(j)}{g(j)}\Big|.
\end{eqnarray*}
In particular,  consider the following cases:
\begin{enumerate}[(i).]
\item
$f=g$.

Then $T(G,f,f)$ reduces to $T(G,1,1)$ in \cite[Example~3.9]{lin1},  which takes value $\sqrt{3}$  and does not depend on the choice of $f$.

\item
$g$ takes a constant value $c\neq 0$.

Then by a direct calculation,  we  have $t(f,c)=|c|^{-3}$  and $T(G,f,c)=|c|^{-3}T(G,f,1)$  for any non-vanishing weight $f$.  On the other hand,  we note that
\begin{eqnarray*}
s(G)=(-1)^1\dim  \partial_1(\Omega_1(G))=-3.
\end{eqnarray*}

\item
$f$ takes a constant value $c\neq 0$.

Then by a direct calculation,  we  have $T(G,c,g)=|c|^{3}T(G,1,g)$  for any non-vanishing weight $g$.
\end{enumerate}

\end{example}

\begin{example}
Consider the triangle $G=(V,E)$ where $V=\{0,1,2\}$ and $E=\{01,12,02\}$ (cf. \cite[Example~3.10]{lin1}).  Let $f$ and $g$ be two non-vanishing real  functions on $V$.  We  have
\begin{eqnarray*}
&\Omega^{f,g}_0(G)={\rm Span}\{e_0,e_1,e_2\}, ~~\Omega_1^{f,g}(G)={\rm Span}\{e_{01}, e_{12},e_{02}\},~~~\Omega^{f,g}_2(G)={\rm Span}\{e_{012}\},\\
&\partial^f_1\Omega_1^{f,g}(G)={\rm Span}\{f(0)e_1-f(1)e_0, f(1)e_2-f(2)e_1, f(0)e_2-f(2)e_0\},\\
&\partial^f_2\Omega^{f,g}_2(G)={\rm Span}\{f(0)e_{12} -f(1)e_{02} + f(2) e_{01}\}
\end{eqnarray*}
and $\Omega_n^{f,g}(G)=0$  for any integer  $n\geq 3$.  We choose the following $\langle~,~\rangle_g$-orthonormal basis
\begin{eqnarray*}
\omega_0=\Big\{ \frac{e_0}{g(0)}, \frac{e_1}{g(1)},\frac{e_2}{g(2)}    \Big\}
\end{eqnarray*}
in $\Omega_0^{f,g}(G)$,   the following $\langle~,~\rangle_g$-orthonormal basis
\begin{eqnarray*}
\omega_1=\Big\{ \frac{e_{01}}{g(0)g(1)}, \frac{e_{12}}{g(1)g(2)},\frac{e_{02}}{g(0)g(2)}    \Big\}
\end{eqnarray*}
in $\Omega_1^{f,g}(G)$, and   the following $\langle~,~\rangle_g$-orthonormal basis
\begin{eqnarray*}
\omega_2=\Big\{ \frac{e_{012}}{g(0)g(1)g(2)} \Big\}
\end{eqnarray*}
in $\Omega_2^{f,g}(G)$.
Choose also the basis
\begin{eqnarray*}
b_0=\Big\{\frac{f(0)e_1-f(1)e_0}{g(0)g(1)}, \frac{f(1)e_2-f(2)e_1}{g(1)g(2)}
  \Big\}
\end{eqnarray*}
in $\partial_1^f\Omega^{f,g}_1(G)$ and  the basis
\begin{eqnarray*}
b_1=\Big\{\frac{f(0)e_{12}-f(1)e_{02}+ f(2)e_{01} }{g(0)g(1)g(2)}
  \Big\}
\end{eqnarray*}
in $\partial_2^f\Omega^{f,g}_2(G)$.  Then their lifts are
\begin{eqnarray*}
\tilde b_1= \Big\{\frac{e_{01}}{g(0)g(1)},\frac{e_{12}}{g(1)g(2)} \Big\}
\end{eqnarray*}
and
\begin{eqnarray*}
\tilde b_2= \Big\{\frac{e_{012}}{g(0)g(1)g(2)}  \Big\}.
\end{eqnarray*}
Hence representing the homology classes by  the  corresponding  harmonic chains and taking the orthogonal complement of $\partial_1^{f,g}\Omega^{f,g}_1(G)$  in ${\rm  Ker}\partial^f_0$ with respect to $\langle~,~\rangle_g$,  we have
\begin{eqnarray*}
\mathcal{H}_0={\rm Span}\Big\{ \frac{f(0)}{g(0)^2}e_0 +\frac{f(1)}{g(1)^2}e_1       + \frac{f(2)}{g(2)^2}e_2 \Big\}
\end{eqnarray*}
so that
\begin{eqnarray*}
h_0=\Big\{  \frac{1}{\sqrt{\sum_{i=0}^2\frac{f(i)^2}{g(i)^2}}} \Big(\frac{f(0)}{g(0)^2}e_0 +\frac{f(1)}{g(1)^2}e_1       + \frac{f(2)}{g(2)^2}e_2 \Big) \Big\}.
\end{eqnarray*}
Moreover, note that  the union of $b_1$ and $\tilde b_1$ spans  $\Omega_1^{f,g}(G)$.  Thus   $h_1$  is the empty-set.  Similarly, both $b_2$ and $h_2$ are empty-sets.  By taking the absolute values of the determinants of the transition matrices,
it follows that
\begin{eqnarray*}
&& [b_0,h_0,\tilde b_0/\omega_0] = \Big|\frac{f(1)}{g(1)}\Big|\cdot  \sqrt{\dfrac{f(0)^2}{g(0)^2}+\dfrac{f(1)^2}{g(1)^2}+\dfrac{f(2)^2}{g(2)^2} }, \\
&& [b_1,h_1,\tilde b_1/\omega_1] = \dfrac{|f(1)|}{|g(1)|}, \\
&& [b_2,h_2,\tilde b_2/\omega_2] = 1.
\end{eqnarray*}
Hence we obtain
\begin{eqnarray*}
T(G,f,g)&=&\prod_{n=0}^2 [b_n,h_n,\tilde b_n/\omega_n]^{(-1)^n}\\
&=& \sqrt{\frac{f(0)^2}{g(0)^2}+\frac{f(1)^2}{g(1)^2}+\frac{f(2)^2}{g(2)^2}}.
\end{eqnarray*}
In particular, consider the following cases:
\begin{enumerate}[(i).]
\item
$f=g$.

  Then $T(G,f,f)=\sqrt{3}$  which is the same as \cite[Example~3.10]{lin1} and does not depend on the choice of $f$.

\item
$g$ takes a constant value $c\neq 0$.

 Then by a direct calculation,  we  have $t(f,c)=|c|^{-1}$  and
$T(G,f,c)=|c|^{-1}T(G,f,1)$  for any non-vanishing weight $f$.  On the other hand, we note that
\begin{eqnarray*}
s(G)=(-1)^1 \dim\partial_1\Omega_1(G)
+(-1)^2\dim\partial_2\Omega_2(G)=-2+1=-1.
\end{eqnarray*}

\item
$f$ takes a constant value $c\neq 0$.

Then by a direct calculation,  we  have $T(G,c,g)=|c|T(G,1,g)$  for any non-vanishing weight $g$.
\end{enumerate}
\end{example}

\smallskip

\section{Further Discussions}

 {\sc Discussion~1}.  Let $f$ and $g$  be any  non-vanishing functions on $V$.
It  follows from Lemma~\ref{le-bbab}   and  Proposition~\ref{pr-3.8}  that for each   nonnegative integer  $n$,  the matrix representation  of the $(f,g)$-weighted Hodge-Laplace operator   is  given by
\begin{eqnarray*}
&&[\Delta_n^{f,g}(G)]= [\Delta_n^{f/g,1}(G)]\\
&=& \Big[\big(\partial_n^{f/g}\mid _{\Omega_n^{f/g,1}(G)}\big)^*\Big]\Big[\big(\partial_n^{f/g}\mid _{\Omega_n^{f/g,1}(G)}\big)\Big]  +\Big[\big(\partial_{n+1}^{f/g}\mid _{\Omega_{n+1}^{f/g,1}(G)}\big)\Big]\Big[\big(\partial_{n+1}^{f/g}\mid _{\Omega_{n+1}^{f/g,1}(G)}\big)^*\Big].
\end{eqnarray*}
Here the adjoints of the boundary maps  are with respect to the canonical  inner product
\begin{eqnarray*}
&&\Big\langle \sum_{v_0v_1\ldots v_n}  x_{v_0v_1\ldots v_n}v_0v_1\ldots v_n,   \sum_{u_0u_1\ldots u_n}  y_{u_0u_1\ldots u_n} u_0u_1\ldots u_n\Big\rangle \\
&=&\sum_{v_0v_1\ldots v_n, u_0u_1\ldots u_n} x_{v_0v_1\ldots v_n}y_{u_0u_1\ldots u_n}\prod_{i=0}^n    \delta(v_i,u_i).
\end{eqnarray*}
It  is  prospective to study   the relations between    the spectra of $\Delta_n^{f,g}(G)$  and the  ratio  of weights  $f/g$.

\smallskip

 \noindent {\sc Discussion~2}.   In  \cite{lin3},   the fundamental group $\pi_1(G)$  of a digraph $G$ was defined.   It is prospective to generalize the definitions of the analytic torsions and the R-torsions of digraphs  by considering the orthogonal representations of $\pi_1(G)$ and the deck transformations  of $\pi_1(G)$ on the covering digraphs.  It is also prospective to study  the analytic torsions and the R-torsions  on weighted digraphs  with respect to the orthogonal representations of $\pi_1(G)$  after the results in  \cite{lin1}.

\bigskip

\section*{Acknowledgement}   The  authors would like to express their  deep
gratitude to the referee for the  careful reading of the manuscript.

{\small

\bigskip

Shiquan Ren  (first author)

Address:   School of Mathematics and Statistics,  Henan University,  Kaifeng  475004,  China.

E-mail:  renshiquan@henu.edu.cn

\bigskip

Chong Wang  (corresponding author)

Address:  School
of Mathematics and Statistics, Cangzhou Normal University, Cangzhou, Hebei, 061000, P. R. China.

E-mail:  wangchong\_618@163.com

}

\end{document}